\documentclass[a4paper,11pt]{article}
\usepackage[top=2cm,bottom=2cm,left=2cm,right=2cm]{geometry}

\usepackage{xspace}
\usepackage{authblk}
\usepackage{tikz}
\usepackage{caption}
\usepackage{subcaption}
\usetikzlibrary{mindmap}
\usepackage{amsmath}
\usepackage{amsthm}
\usepackage{bbold}
\usepackage{enumitem}
\usepackage{url}
\usepackage{lipsum}
\usepackage{graphicx}
\usepackage[flushleft]{threeparttable}

\newtheorem{proposition}{Proposition}

%\usepackage[a4paper, left=30mm, right=30mm, top=25mm, bottom=25mm]{geometry}
% Package to generate and customize Algorithm as per ACM style
% \usepackage[ruled]{algorithm2e}
% \renewcommand{\algorithmcfname}{ALGORITHM}
% \SetAlFnt{\small}
% \SetAlCapFnt{\small}
% \SetAlCapNameFnt{\small}
% \SetAlCapHSkip{0pt}
% \IncMargin{-\parindent}

% Metadata Information
% \acmVolume{9}
% \acmNumber{4}
% \acmArticle{39}
% \acmYear{2010}
% \acmMonth{3}

%tools format
\newcommand{\tool}[1]{\vspace{.30cm}\phantomsection\centerline{\large \textbf{#1}}\vspace{.1cm}}

\newcommand{\ucube}{\emph{ucube}\xspace}
\newcommand{\ucubet}{\emph{ucube2}\xspace}
\newcommand{\ducube}{\emph{ducube}\xspace}
\newcommand{\ducubet}{\emph{ducube2}\xspace}
\newcommand{\sphere}{\emph{sphere}\xspace}
\newcommand{\spheret}{\emph{sphere2}\xspace}
\newcommand{\ctype}{\emph{ctype}\xspace}
\newcommand{\cps}{\emph{cp6}\xspace}
\newcommand{\hec}{\emph{hec}\xspace}
\newcommand{\Lt}{\emph{lambda}\xspace}
\newcommand{\Ltt}{\emph{lambda2}\xspace}
\newcommand{\rf}{\emph{r500}\xspace}
\newcommand{\tsp}{\emph{tsp7}\xspace}
\newcommand{\hr}{$H$-representation\xspace}
\newcommand{\vr}{$V$-representation\xspace}
\newcommand{\lrs}{\emph{lrs}\xspace}
\newcommand{\fel}{\emph{fel}\xspace}

\newcommand{\mplrs}{\emph{mplrs}\xspace}

\newcommand{\minrep}{\emph{minrep}\xspace}

\newcommand{\clark}{\emph{clark}\xspace}
\newcommand{\cddlib}{\emph{cddlib}\xspace}
\newcommand{\mai}{\emph{mai}\xspace}

\newcommand{\maief}{\emph{mai32ef}\xspace}

\newcommand{\mi}{\emph{mi}\xspace}
\newcommand{\cdd}{\emph{lcdd}\xspace}

\newcommand{\lrslib}{\emph{lrslib}\xspace}

\newcommand{\Pm}{P_{-i}}
\newcommand{\Am}{A_{-i}}
\newcommand{\bm}{b_{-i}}
\usepackage{todonotes}
\usepackage{amsmath}
\usepackage{amssymb}
\usepackage{latexsym}
\usepackage{graphicx}
\usepackage{color}
\usepackage{hyperref}
\usepackage{algorithm}
\usepackage{algpseudocode}

%% typeset these things in roman
\DeclareMathOperator{\lin}{lin}
\DeclareMathOperator{\conv}{conv}
\DeclareMathOperator{\conic}{conic}

%%%%%%%%  ABBREVIATIONS (for \calX, \frakX, \bbX etc. in math mode)
\makeatletter
\newcommand{\@abbrev}[3]{
  \def\c@a@def##1{
      \if ##1.
        \relax
      \else
        \@ifdefinable{\@nameuse{#1##1}}{\@namedef{#1##1}{#2##1}}
        \expandafter\c@a@def
      \fi
    }
  \c@a@def #3.
}
\@abbrev{bb}{\mathbb}{ABCDEFGHIJKLMNOPQRSTUVWXYZ}
\@abbrev{bf}{\mathbf}{ABCDEFGHIJKLMNOPQRSTUVWXYZabcdefghijklmnopqrstuvwxyz}
\@abbrev{bit}{\boldsymbol}{ABCDEFGHIJKLMNOPQRSTUVWXYZabcdefghijklmnopqrstuvwxyz}
\@abbrev{cal}{\mathcal}{ABCDEFGHIJKLMNOPQRSTUVWXYZ}
\@abbrev{frak}{\mathfrak}{ABCDEFGHIJKLMNOPQRSTUVWXYZabcdefghijklmnopqrstuvwxyz}
\@abbrev{rm}{\mathrm}{ABCDEFGHIJKLMNOPQRSTUVWXYZabcdefghijklmnopqrstuvwxyz}
\@abbrev{scr}{\mathscr}{ABCDEFGHIJKLMNOPQRSTUVWXYZ}
\@abbrev{sf}{\mathsf}{ABCDEFGHIJKLMNOPQRSTUVWXYZabcdefghijklmnopqrstuvwxyz}
\makeatother

%\smartqed

%\DeclareGraphicsRule{*}{mps}{*}{}

%\newtheorem{fact}[theorem]{Fact}

% Document starts
\begin{document}

%\include{./mathlib}
%\include{./macros}
% Title portion
\title{Parallel Redundancy Removal in \lrslib with Application to Projections\thanks{Partially supported by JSPS 
Kakenhi Grants 
 20H00579,  %imai kiban(a)
 20H00595,  %arimura kiban(a)
 20H05965,  %kawarabayashi afsa
 22H05001  %kawarabayashi kiban(s)
 and 23K11043.  %kiban(c)
}}
\author[1]{David Avis}
\author[2]{Charles Jordan}
\affil[1]{
   School of Informatics, Kyoto University, Kyoto, Japan and
  \\ School of Computer Science,
   McGill University, Montr{\'e}al, Qu{\'e}bec, Canada \\
    \texttt{avis@cs.mcgill.ca}
}
\affil[2]{
   Department of Information and Management Science, Otaru University of Commerce,
   Japan\\
   \texttt{skip@res.otaru-uc.ac.jp}
  }

\maketitle

\begin{abstract}
We describe a parallel implementation in \lrslib for removing redundant
halfspaces and finding a minimum representation for an \hr
of a convex polyhedron. By a standard transformation, the same code
works for \vr{}s. We use this approach to speed up the
redundancy removal step in Fourier-Motzkin elimination. Computational
results are given including a comparison with Clarkson's algorithm,
which is particularly fast on highly redundant inputs.

\noindent{}Keywords: redundancy removal, convex hulls, minimum representations, 
Fourier-Motzkin elimination, parallel processing
\end{abstract}

\section{Introduction}
\label{sec:intro}
In this section we give a general overview of the topics to be discussed, leaving formal definitions
for the next section.
In this paper we deal with convex polyhedra which we assume, to avoid trivialities, are
non-empty.
The computational problems of (1) removing redundancy, (2) finding a minimum
representation and (3) projecting a system of $m$ linear inequalities in
$\bbR^n$ (an \hr)
are fundamental in many areas of mathematics and science.
The first two problems are usually solved using linear programming (LP), and
the third via Fourier-Motzkin (F-M) elimination.
Linear programming is solvable in polynomial time and so are the first two
problems.  Projection is more difficult and was shown to be $NP$-hard by
Tiwary~\cite{Ti08}. 
Similar problems arise when the input is given by a set of vertices and
extreme rays (a \vr).
In this case, the first two problems are computationally equivalent to the
inequality setting whereas the projection problem is easier and can be
solved in polynomial time.

For the redundancy problem, the \emph{classic method} is to consider each
inequality in turn.  Checking redundancy of a given inequality (ie, whether
it can be removed without changing the solution set) can be done by solving
one LP and each redundant inequality is deleted once it is found.
This simple approach requires $m$ LPs to be solved, with the number of constraints
equal to
$m$ minus the number of redundancies already encountered.
Clarkson~\cite{Cl94} introduced an output-sensitive improvement.
Again $m$ LPs are solved
but the number of constraints is bounded by the number of non-redundant
inequalities, see Section~\ref{redund} for details.
Fukuda et al.~\cite{FGS15} also presented an output-sensitive approach to
redundancy removal based on the sign structure of all associated LP
dictionaries. The complexity contains an exponential term in the dimension
and it does not appear to have been implemented.

In general, certain inequalities may be satisfied as equations by the solution
set for the entire system and these are known as linearities.  
In the minimum representation problem, all linearities must be identified
and all redundancy removed. 
Redundant inequalities cannot be linearities.
For a given non-redundant inequality, a second LP can be used to determine if
it is a linearity.  Alternatively this can be done by solving a single large
LP as shown by Freund et al.~\cite{FRT}, although the practicality of that
method is unclear. 

The projection problem is to project the polyhedron into a subspace.
For the easier problem (projecting a \vr), we simply select the 
the coordinates of the input vectors that remain after the projection and
then find a minimum representation. The output polyhedron has fewer dimensions
and at most as many vertices and rays, so is smaller than the input.
For projecting an \hr, the F-M method eliminates one variable
at a time but the number of inequalities can increase by a quadratic factor
at each step.  Many of these may be redundant, so an efficient implementation
includes repeated redundancy removal.

The methods outlined above for redundancy removal are sequential, however
they seem good candidates for parallelization.
The classic method is particularly simple and is what we chose to implement.
The subtleties involved are a main topic of this paper.
The paper is organized as follows. The next section contains formal
definitions. Section~\ref{redund} explains how we parallelize redundancy
removal and minimum representation algorithms, and also contains a brief
description of Clarkson's algorithm.
Section~\ref{fel} shows how parallelization is used to speed up F-M
elimination and computational results are given in Section~\ref{results}. 
Finally we give some conclusions and directions for future research.

\section{Basic Definitions}
\label{sec:defs}
We begin by giving some basic definitions related to polyhedra and
linear programming. For more information, see
the books by Chv\'{a}tal~\cite{Chvatal}, Fukuda~\cite{Fukuda20} and
Ziegler~\cite{Ziegler}.
Given an $m \times n $ matrix $A=(a_{ij})$, an $m$ dimensional vector
$b$ and a possibly empty
subset $J \subseteq \{1,\ldots,m\}$ we let $A_J$ and $b_J$ denote
the submatrix of $A$ and subvector of $b$ with rows indexed by $J$.
We denote by $A_{-J}$ and $b_{-J}$ the submatrix and subvector
where the rows corresponding to the indices $J$ have been deleted.
In the case where $J=\{i\}$ is a singleton we write $A_i,b_i,\Am,\bm$
respectively. 

Let $L$ and $I$ be a partition of $\{1,\ldots,m\}$.
A
{\em convex polyhedron}, or simply {\em polyhedron}, $P$ is defined as:
\begin{equation}
\label{poly}
P=\{x\in\bbR^n:b_L+A_Lx=0,~b_I+A_Ix \ge 0 \}.
\end{equation}
This description of a polyhedron is known as an \emph{\hr} and
the rows indexed by $L$ are called linearities.
To avoid trivialities we will assume that all polyhedra discussed in
this paper are non-empty; this can be tested by a linear
program (LP).
We may also assume that the system of equations defined by $L$ is linearly
independent, using Gaussian elimination if necessary to delete dependencies.

Another way to describe $P$ is by a \emph{\vr}.
In this case we have finite sets of vectors $V,R,S$ in $\bbR^n$ of vertices, rays
and linearities. The fundamental Minkowski-Weyl theorem states that
for every $P$ defined by (\ref{poly})
\begin{equation}
\label{polyv}
P= \conv(V) + \conic(R) + \lin(S).
\end{equation} 
In words, every $x \in P$ can be expressed as the sum of a convex combination
of vertices, a nonnegative combination of rays and a linear combination
of linearities. The most fundamental problem in polyhedral computation
is the conversion of an \hr to a \vr and
vice versa. The former problem is often called 
the {\it vertex enumeration problem}
and the latter problem the {\it facet enumeration problem}. This
computation forms the core of \lrslib, see~\cite{AJ15b} for a
discussion of how it is solved in parallel.

For $i \in I$, we let $\Pm$ denote the polyhedron defined
by $\Am$ and $\bm$.
If $P=\Pm$ we say that row $i$ is {\it redundant}.
This is equivalent to saying that each $x \in \Pm$ satisfies
$b_i+A_i x \ge 0$. If each such $x$ actually satisfies $b_i +A_ix > 0$
we say row $i$ is {\it strongly redundant} otherwise it is {\it weakly redundant}. 
Finally if for each
$x \in P$ we have $b_i+A_ix = 0$,
we say that row $i$ is a {\it hidden linearity} and index $i$ can be moved
to the set $L$ if the rows indexed by $L$ remain linearly independent.
Otherwise row $i$ is deleted.

The \hr (\ref{poly}) of $P$ is {\it non-redundant}
if there are no redundant indices $i$. It is a {\it minimum representation}
if it is non-redundant and contains no hidden linearities.
In this case the dimension of $P$ is $n-|L|$ and $P$ is
{\it full dimensional} if $L$ is empty. The first part of
this paper describes a parallel method for removing redundancies
and computing a minimum description of a polyhedron based on linear programming.
We also describe Clarkson's algorithm~\cite{Cl94} which gives a much more
efficient
LP approach when the input polyhedron is highly redundant.
However, this method seems more challenging to parallelize.

Section~\ref{fel} concerns projections of polyhedra.
Let $A$ and $B$ partition the column indices $\{1,\ldots,n\}$.
For $x \in P$ we write $x=(x_A,x_B)$ to represent the corresponding
decomposition of $x$ into subspaces $\bbR^A$ and $\bbR^B$ that partition
$\bbR^n$. 
The {\it projection} of $P$ onto the subspace
$\bbR^A$ is given by
\begin{equation}
\label{proj}
P_A = \{ x_A \in \bbR^A: \exists x=(x_A,x_B) \in P \}.
\end{equation}
We will show how the parallel redundancy method described in Section~\ref{redund}
can be used to speed up the operation of the F-M
method of computing projections. 

\section{Parallel Redundancy Removal and Finding a Minimum Representation}
\label{redund}

Assume we are given an \hr(\ref{poly}) of a non-empty polyhedron $P$  
where $L$ defines a linearly independent set of equations. Choose $i \in I$ and
consider the two LPs:
\begin{equation}
\label{lpmin}
z_{\min} = \min b_i +  A_i x\phantom{a_{a}} \mbox{s.t.} ~~~
\bm + \Am x \ge 0
\end{equation}
\begin{equation}
\label{lpmax}
z_{\max} = \max b_i + A_i x\phantom{i_{i}} \mbox{s.t.} ~~~
\bm + \Am x \ge 0.
\end{equation}
The status of the $i$-th inequality is determined by the following
well known proposition based on the definitions. For completeness we give a short
proof.
\begin{proposition}
\label{basic}
The inequality $b_i+A_ix \ge 0$ is
a linearity if $z_{\max}=0$ otherwise it is
\begin{enumerate}[label=(\alph*)]
\item
weakly redundant if $z_{\min} = 0$
\item
strongly redundant if $z_{\min} > 0 $
\item
non-redundant if $z_{\min} < 0$ or unbounded 
\end{enumerate}
\end{proposition}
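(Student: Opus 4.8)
The plan is to argue directly from the definitions, exploiting three structural facts about the two LPs. First, the feasible region of both programs is exactly $\Pm$, and $P = \Pm \cap \{x : b_i + A_i x \ge 0\}$, so $P \subseteq \Pm$. Second, since $P$ is non-empty, $\Pm$ is non-empty too, so both LPs are feasible. Third, any $x_0 \in P$ lies in $\Pm$ and satisfies $b_i + A_i x_0 \ge 0$, which already forces $z_{\max} \ge 0$. This last point is what makes the stated case split exhaustive: the ``otherwise'' branch is precisely $z_{\max} > 0$.

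I would first characterize redundancy through $z_{\min}$. Row $i$ is redundant exactly when $\Pm \subseteq P$, i.e.\ when every $x \in \Pm$ satisfies $b_i + A_i x \ge 0$, which is equivalent to $z_{\min} \ge 0$. To separate the two redundant sub-cases I would invoke the fundamental theorem of linear programming: a feasible LP whose objective is bounded below attains its minimum. Thus when $z_{\min} = 0$ there is a witness $x^{*} \in \Pm$ with $b_i + A_i x^{*} = 0$; this $x^{*}$ lies in $P$ and shows the inequality is not strongly redundant, hence weakly redundant, giving~(a). When $z_{\min} > 0$, every $x \in \Pm \supseteq P$ satisfies $b_i + A_i x \ge z_{\min} > 0$, which is strong redundancy, giving~(b). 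When $z_{\min} < 0$, or the minimization is unbounded, there is a feasible point with $b_i + A_i x < 0$, lying in $\Pm$ but not in $P$, so row $i$ is non-redundant, giving~(c).

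For the linearity claim I would argue that $z_{\max} = 0$ forces $b_i + A_i x \le 0$ for every $x \in \Pm$, while every $x \in P \subseteq \Pm$ also satisfies $b_i + A_i x \ge 0$; combining these yields $b_i + A_i x = 0$ throughout $P$, which is exactly the definition of a hidden linearity (the linearity case of the proposition). Conversely, if $z_{\max} > 0$ then some $x^{+} \in \Pm$ has $b_i + A_i x^{+} > 0$, and since $x^{+} \in P$ this rules out the linearity case and confirms that the $z_{\min}$-based classification applies in the ``otherwise'' branch.

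I expect the only genuine subtlety to be the weakly redundant case, where distinguishing weak from strong redundancy requires producing a point of $P$ on which the inequality is tight; this is where attainment of the LP optimum, rather than merely the value of the infimum, is essential, and some care is needed for polyhedra containing a line, where one appeals to boundedness of the objective rather than to the existence of an optimal vertex.
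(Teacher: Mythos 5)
Your proof is correct and follows essentially the same route as the paper's: both arguments classify row $i$ by the signs of $z_{\max}$ and $z_{\min}$ over the feasible region $\Pm$, using $P \subseteq \Pm$ to pass between the two polyhedra. You are in fact slightly more careful than the paper, which implicitly uses attainment of the minimum in the $z_{\min}=0$ case to produce the tight point that separates weak from strong redundancy.
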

\begin{proof}
In the LP dictionary (see Chv\'{a}tal~\cite{Chvatal}) the $i$-th inequality
is represented using the non-negative slack variable $x_{n+i}$
as
\begin{equation}
x_{n+i} = b_i+A_ix.
\end{equation}
LP (\ref{lpmax}) seeks to find a feasible point in $P_{-i}$ that satisfies the
inequality strictly. If $z_{\max}=x_{n+i} = 0$ this is not possible so
the inequality is in fact a linearity. Otherwise LP (\ref{lpmin}) seeks
to find a point in $P_{-i}$ that violates the constraint.
If $z_{\min}=x_{n+i} \ge 0$ then the inequality cannot be violated
so it is redundant, and if $z_{\min} > 0$ it is strongly redundant.
Finally if  $z_{\min} < 0$ then there is some feasible point in $P_{-i}$ that violates the constraint
and hence it is non-redundant.
\end{proof}
It might seem that the proposition leads immediately to a parallel algorithm for redundancy removal:
check and classify each row index independently. However this fails due to the possibility
of duplicated rows, and in the presence of linearities these may be hard to discover.
For example, consider the system:
%% the annoying {} here are to help Latex notice these are binary operations,
%% so it gives the correct spacing...
\begin{alignat*}{3}
3+{}&x_1&{}-2x_2 &  = 0 \\
    &x_1&        &\ge 0 \\
-6-{}&x_1&{}+4x_2&\ge 0.
\end{alignat*}
Both rows 2 and 3 considered independently are redundant since if we add twice row 1 to row 3
we obtain row 2. They are both weakly redundant: we can eliminate either one but not both.
But if each of these rows is considered by a different processor both will be marked redundant,
which is an error as
one of them must remain as non-redundant.
This problem becomes more acute when the system contains hidden linearities which
can easily mask duplication.
Nevertheless, inequalities classified as linearities, strongly redundant or non-redundant
will all be correctly classified. Only weakly redundant inequalities are problematic.

To solve this problem we recall that for full dimensional polyhedra, ie. when there are no
linearities, the \hr is unique up to multiplication of rows by positive scalars.
In this case we can reduce each row by its greatest common divisor (GCD) and then sort the
rows to reveal and remove duplication. Now each remaining inequality can be tested independently
and in parallel to see if it is redundant. Our general strategy will be to first find any hidden linearities
in $P$. Then we will use the linearities to eliminate variables until the resulting system
is full dimensional. 

As a first step, we can check whether the \hr(\ref{poly}) has any hidden linearities
by the single LP:
\begin{equation}
\label{lptest}
\max x_{n+1} ~~~\mbox{s.t.}~~~ b_L + A_Lx=0,~~ b_I + A_Ix\ge\mathbb{1}_{|I|}\,x_{n+1}
\end{equation}
where $\mathbb{1}_t$ denotes a column of $t$ ones. The LP terminates with $x_{n+1} > 0$
if and only if there is a point in $P$ that does not lie on the boundary of any inequality
and so there are no hidden linearities. If there are any hidden linearities then they
can be identified via Proposition~\ref{basic} and this can be done in parallel.
If there are no hidden linearities then LP (\ref{lpmax}) does not need to be solved
when classifying the inequality set $I$.
The complete procedure is described below for a polyhedron $P$
given as (\ref{poly}).

\tool{\textbf{Parallel algorithm for finding a minimum representation}}
\label{mm1}
%\begin{enumerate}[label=(\alph*),leftmargin=2.5\parindent,rightmargin=2\parindent]
\begin{enumerate}[label=(\alph*)]
\setlength\itemsep{-1pt}
\hrule\vspace{-.1cm}
\item\label{a}
Solve LP (\ref{lptest}) to determine if there are any hidden linearities.
If there are none, set $W=I$ and go to step~\ref{c}.
\item\label{b} (parallel)
For each $i \in I$ determine the status of $b_i+A_i x \ge 0$ according to Proposition \ref{basic}.
Place $i$ into the corresponding subset $S$ (strongly redundant), $W$ (weakly redundant),
$N$ (non-redundant) or otherwise add it to $L$ and remove it from $I$.
\item\label{c}
Remove any index $i \in L$ from $L$ for which $b_i + A_i x=0$ is linearly dependent.
\item\label{d}
For each remaining index $i \in L$, use equation $b_i + A_i x=0$
to remove one variable from $b_I + A_I x \ge 0$ by substitution.
\item\label{e}
Reduce each inequality by its GCD and eliminate any duplicate rows from $I$ obtaining an
index set $J$
and the reduced system  $\overline{b_J} + \overline{A_J} x \ge 0$. Note there is no linearity 
in this system as it is full dimensional.
\item\label{f} (parallel)
For each $i \in W \cap J$ determine the status of inequality $i$ by solving LP (\ref{lpmin})
for the reduced system, classifying them as in step~\ref{b}.
\end{enumerate}
\hrule
\vspace{0.3cm}

Observe that if there are no hidden linearities then only one LP needs to be solved for
each index in $I$. When there are hidden linearities, the number of LPs to solve depends on
the order of solving LPs (\ref{lpmin}) and (\ref{lpmax}) in step~\ref{b}. If we solve them in the order described,
then the second LP only needs to be solved when a weak redundant inequality is found.
This is the order used in \lrslib.
In the reverse order, the second LP needs to be solved whenever a linearity is not found.
In either case a further LP  is required for each weakly redundant inequality in step~\ref{f}.

A modified procedure requires at most 2 LPs to be solved per inequality when there are hidden
linearities. In step~\ref{b} we could just solve (\ref{lpmax}) and hence determine all linearities
in $I$.
We define $W$ to be all remaining inequalities and proceed as given. This approach will be faster
if most inequalities are weakly redundant, since these require only 2 LPs rather than 3. However,
if most inequalities are not weakly redundant or hidden linearities then it will be 
slower as most of the time only the LP (\ref{lpmin}) needs to be  solved.

The size of the
LPs to be solved can be greatly reduced in cases where most of the input is redundant
using a method introduced by Clarkson~\cite{Cl94}. He states his method in terms of identifying
the extreme points of a given set of input points in $\bbR^d$. 
The equivalent algorithm stated in terms of detecting redundant inequalities in an \hr
is given in Section~7.1 of~\cite{Fukuda20}.
Quoting from~\cite{Cl94} (emphasis ours):

\tool{\textbf{Clarkson's algorithm}~\cite{Cl94}}
\label{clark}
\hrule\vspace{-.1cm}
\begin{quote}
The algorithm here is as follows: process the points
of $S$ in turn, maintaining a set $E \subset S$ of extreme
points. Given $p \in S$, it is possible in $O(|E|) = O(A)$
time, using linear programming, to either show that
$p$ is a convex combination of points of $E$, or find a
witness vector $n$ for $p$, so that $n\cdot{}p > n\cdot{}q$ for all
$q \in E$. If the former, $p$ is not extremal and can be
disregarded for further consideration. If the latter,
although $p$ is not necessarily an extreme point of $S$,
one can easily in $O(n)$ time find the point $p' \in S$ that
maximizes $n\cdot{}p'$. Such a {\bf point is extremal}, and can
be added to $E$; note that it cannot already be in $E$.
\end{quote}
\hrule
\vspace{0.3cm}

Suppose the number of extreme points is $k$ which is much smaller than the input size $m$.
Then the LP to be solved can never have more than $k$ constraints compared with $m$ constraints
in the classic method. We note one point that is not mentioned in~\cite{Cl94}.
In the description above
it is assumed that $p'$ is unique. But there may be many points of $S$ on the hyperplane
$n\cdot{}x= n\cdot{}p'$. If these points are not in convex position
a non-extreme point of $S$ on the maximizing hyperplane
may be selected and marked as extremal in the output. To resolve these degenerate cases
a further recursive search may be needed on this hyperplane, increasing the 
worst-case computational complexity somewhat.

We will see in Section~\ref{results} that Clarkson's method is considerably faster than the 
classical method for inputs with high redundancy. It is usually somewhat
faster even on inputs with low redundancy
since the LPs it solves start out small, gradually increasing to the full set of
non-redundant constraints at the end of the run. In the classical method, all LPs contain
all constraints at the beginning and redundant constraints are deleted. 
To our knowledge
there is no publicly available parallel implementation of Clarkson's method and it looks
like an interesting challenge.

Finally an alternative, but usually much slower, method of computing a minimum representation is
via the $H$/$V$ transformation. Starting
with any \hr, a minimum representation of its \vr will be produced. This
can then be re-input to produce a minimum representation of the original \hr.
Although this is often impractical, it is a good way
to independently verify results on relatively small instances when testing codes.

Conversely, for many problems it is faster to first compute a minimum representation
before doing an $H$/$V$ conversion. This is because the minimum representation computation
is usually easier and the potential reduction in problem size and degeneracy speeds up
the $H$/$V$ conversion. However, in Section~\ref{results} we will see instances where
this is not the case.

\section{Projection by the Fourier-Motzkin Method}
\label{fel}

Projection of a polyhedron $P$ along coordinate axes to a lower dimension is an important
problem in many areas. For this problem the complexity is very different for \hr{}s and \vr{}s.
We start with the latter because it is very straightforward: simply delete the coordinates
of the vertices/rays/linearities that are to be projected out. This will normally generate
a redundant \vr and the methods of the last section can be used to remove any redundancies.

The F-M method can be used to project an \hr.
See~\cite{Fukuda20} or~\cite{Ziegler} for details. We give a sketch here to describe
how parallel redundancy removal can be used. The basic idea is to project out one
variable at a time. We start with an \hr (\ref{poly}) of $P$ and for simplicity describe how to
project out $x_n$. A minor modification allows the elimination of any arbitrary
variable. 

\tool{\textbf{Fourier-Motzkin elimination of $\mathbf{x_n}$}}
\label{fm}
%\begin{enumerate}[label=(\alph*),leftmargin=2.5\parindent,rightmargin=2\parindent]
\begin{enumerate}[label=(\alph*)]
\setlength\itemsep{-1pt}
\hrule\vspace{-.1cm}
\item\label{fela}
If there is an $i \in L$ with coefficient $a_{in} \neq 0$,
use the equation of row $i$ to eliminate $x_n$ getting a new \hr. Go to
step~\ref{fele}.
\item\label{felb} Define index sets 
\begin{equation*}
R =\{i \in I : a_{in} > 0 \} ~~~
S =\{i \in I : a_{in} < 0 \} ~~~
Z =\{i \in I : a_{in} = 0 \}
\end{equation*}
Since $b_L +A_L x = 0$ and  $b_Z +A_Z x \ge 0$
do not contain $x_n$ they remain unchanged after projecting out $x_n$.
\item\label{felc} For each $r \in R$ and $s \in S$ combine the inequalities
\begin{equation}
\label{oldin}
 -b_r - \sum_{j=1}^{n-1} a_{rj} x_j \le a_{rn} x_n~~~~~ 
-a_{sn} x_n \le b_s + \sum_{j=1}^{n-1} a_{sj} x_j
\end{equation}
obtaining 
\begin{equation}
\label{newin}
 \frac{-b_r - \sum_{j=1}^{n-1} a_{rj} x_j}{a_{rn}} \le
x_n \le
 \frac{b_s + \sum_{j=1}^{n-1} a_{sj} x_j}{-a_{sn}}.
\end{equation}
Deleting $x_n$ we get a new inequality in the remaining variables.
The new \hr has $|L|$ equations and $|Z|+|R||S|$ inequalities.
\item\label{fele}
Compute a minimum representation of the new \hr.
\end{enumerate}
\hrule
\vspace{0.3cm}
The correctness of this procedure is not difficult to establish, see either of the two
earlier references for details. By repeating the procedure a projection
onto any subset of coordinate axes can be found. 

It is clear that virtually all the computational
time will be taken in step~\ref{fele} since in the worst case there may be roughly $n^2/4$ inequalities
in the system. Various methods have been proposed to do this computation (see e.g.~\cite{Fukuda20})
but in \lrslib we use the parallel algorithm described in the previous section for finding
a minimum representation of the new \hr. A key observation is that checking for hidden linearities
only needs to be done initially for the input polyhedron $P$.

\begin{proposition}
If the input  \hr of $P$ for Fourier-Motzkin elimination is a minimum representation then the \hr produced
in either step~\ref{fela} or~\ref{felc} of the procedure will not contain hidden linearities.
\end{proposition}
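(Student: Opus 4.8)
The plan is to argue by contradiction in each of the two cases, exploiting two facts: the representation produced is a correct description of the projection $P' := P_A$ (established by the assumed correctness of the procedure), so every point of $P'$ is the image of some $x \in P$ under dropping $x_n$; and each produced inequality, evaluated at a projected point $x'$, takes the same value as a corresponding affine expression evaluated at the original $x \in P$. Combined, these let me ``pull back'' any equality holding identically on $P'$ to an equality holding on all of $P$. Since the input is a minimum representation it has no hidden linearities, meaning every inequality indexed by $I$ is strictly satisfied somewhere on $P$; a hidden linearity in the output would contradict exactly this.

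First I would treat step~\ref{fela}, where a linearity row $i \in L$ with $a_{in} \neq 0$ is used to solve $x_n = f(x_1,\dots,x_{n-1})$ and substitute. For any $x \in P$ the equation of row $i$ holds, so after substitution the value of any inequality $j \in I$ at the projected point $x' = (x_1,\dots,x_{n-1})$ equals $b_j + A_j x$. Hence, if the substituted row $j$ were a hidden linearity of $P'$, i.e. identically zero on $P'$, then pulling back along the projection would force $b_j + A_j x = 0$ for every $x \in P$, contradicting minimality of the input. Note this argument needs neither a dimension count nor an explicit description of the affine hull of $P'$.

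Next I would handle step~\ref{felc}, splitting the produced inequalities into those indexed by $Z$ and those obtained from a pair $r \in R$, $s \in S$. The rows in $Z$ do not involve $x_n$, so a hidden linearity among them would, by the same evaluation-equality, already be a hidden linearity of $P$ --- a contradiction. For a combined row I would write the produced inequality as the strictly positive combination $(-a_{sn})(b_r + A_r x) + a_{rn}(b_s + A_s x) \ge 0$, in which the $x_n$ coefficient $(-a_{sn})a_{rn} + a_{rn}a_{sn}$ vanishes and, since $r \in R$ gives $a_{rn} > 0$ and $s \in S$ gives $-a_{sn} > 0$, both multipliers are positive. On $P$ both parent rows satisfy $b_r + A_r x \ge 0$ and $b_s + A_s x \ge 0$, so this produced expression is a positive combination of two non-negative quantities.

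The crux --- and the step I expect to carry the real weight --- is the combined case: if this produced inequality were a hidden linearity of $P'$, the positive combination would vanish identically on $P'$ and hence, pulling back, on all of $P$; because a strictly positive combination of two non-negative terms can vanish only when both terms vanish, this forces $b_r + A_r x = 0$ and $b_s + A_s x = 0$ for every $x \in P$, making both $r$ and $s$ hidden linearities of $P$ and contradicting minimality. The only delicate points are verifying that the $x_n$ coefficient genuinely cancels and that both multipliers are strictly positive, since these are precisely what license the ``both terms must vanish'' conclusion; once they are in place every case reduces to the same contradiction with the no-hidden-linearity hypothesis on $P$.
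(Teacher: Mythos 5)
Your proof is correct, and it reaches the same final contradiction as the paper (a hidden linearity in the output forces the parent row(s) to be hidden linearities of the input, violating minimality), but by a genuinely different mechanism in both cases. For step~\ref{fela} the paper uses a dimension count --- eliminating one explicit linearity and one variable leaves the codimension accounted for, so no hidden linearity can appear --- whereas you pull the identity back through the substitution; both work, and yours generalizes without needing to know the dimension of the projection. For step~\ref{felc} the paper argues that if (\ref{newin}) becomes a linearity then ``the inequalities become equations and we can equate coefficients,'' concluding that rows $r$ and $s$ are proportional; equating coefficients of two affine functions that agree on the projected polyhedron is only licensed when that polyhedron affinely spans the ambient space, which fails whenever $L\neq\emptyset$, so your route is actually the more careful one. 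You instead write the combined row as the positive combination $(-a_{sn})(b_r+A_rx)+a_{rn}(b_s+A_sx)$, observe that the $x_n$ coefficient cancels so the expression descends to the projection, and use the elementary fact that a positive combination of nonnegative quantities vanishes only when both do. This buys rigor at no cost in length; the paper's version buys brevity. Your explicit treatment of the rows indexed by $Z$ fills a case the paper passes over in silence, and your reliance on the correctness of F-M elimination is exactly the reliance the paper itself declares before the proposition.
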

\begin{proof}
By assumption, $P$ has dimension $n-|L|$. If step~\ref{fela} is executed 
one equation is
eliminated from $L$ and the number of variables becomes $n-1$. 
The dimension is unchanged and there can be no hidden linearities introduced.

If step~\ref{felc} is executed, suppose (\ref{newin}) becomes a linearity for
a certain pair $r, s$. The inequalities become equations
and we can equate coefficients obtaining $b_r/a_{rn} = b_s/a_{sn}$ and
$a_{rj}/a_{rn}=a_{sj}/a_{sn},~ j=1,\ldots,n$. 
Therefore (\ref{oldin}) defines a hidden linearity, a contradiction.
\end{proof}

As pointed out in the introduction,
it is generally much easier to project a \vr than to project
an \hr. We can make use of this fact to produce a projection of
an \hr without using F-M elimination. Let $P_H$
be the \hr of a polyhedron $P$ as in (\ref{poly}). Suppose a projection
map $\pi$ projects $P$ onto $Q$. F-M elimination directly
computes $Q_H = \pi(P_H)$. However, as shown in Figure~\ref{gold}
one can first convert $P_H$ into its \vr $P_V$, compute $Q_V=\pi(P_V)$
and finally compute $Q_H$ from $Q_V$. The first and third operations
are $H$/$V$ transformations. The success of this method depends on
doing these more efficiently than the F-M elimination
computation. We will see this in Section \ref{results}.

\begin{figure}[hb]
    \centering
    \begin{tikzpicture}
     \node (PH) at (0,2) {$P_H$};
     \node (PV) at (2,2) {$P_V$};
     \node (QH) at (0,0) {$Q_H$};
     \node (QV) at (2,0) {$Q_V$};
     \draw [thick, ->] (PH.12) -- (PV.168);
     \draw [thick, <-] (PH.-12) -- (PV.192);
     \draw [thick, ->] (QH.12) -- (QV.168);
     \draw [thick, <-] (QH.-12) -- (QV.192);
     \draw [thick, ->] (PH) -- (QH) node[midway, left] {$\pi$};
     \draw [thick, ->] (PV) -- (QV) node [midway, right] {$\pi$};
   \end{tikzpicture}
    \caption{Golden square}
    \label{gold}
\end{figure}
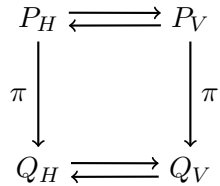

\section{Computational Results}
\label{results}
In this section we give computational results using two parallel clusters of
computers at Kyoto University.
For most results we used the \mi cluster of three similar machines containing
Ryzen Threadripper CPUs with a total of
160 cores and average clock speed of 2.8GHz. 
We made timings for 8 cores (typical laptop), 32 cores
(high performance desktop) and 160 cores (small cluster).
Some results are given using the \mai cluster with AMD Opteron
CPUs with somewhat slower 2.3GHz clock speed.
Our implementation is included in 
\lrslib v.7.3\footnote{\url{https://cgm.cs.mcgill.ca/~avis/C/lrs.html}}.
All programs used do computations in exact arithmetic.

\subsection{Redundancy Removal and Minimum Representation}
In this section we present some computational results to illustrate the speedup obtained by
using parallel processing for redundancy removal and computing a minimum representation.
The single processor version is executed by \lrs with options \texttt{testlin} and \texttt{redund},
aliased as \minrep,
and the parallel version is executed by \mplrs with the \texttt{minrep} option.
Our intention is not to do a comparison with other methods.  However,
we include results using Clarkson's algorithm \clark, as implemented in 
\cddlib v.0.94m\footnote{\url{https://github.com/cddlib/cddlib}} by Komei Fukuda,
to show the remarkable speedups it achieves for highly redundant problems.
The results are shown in Table \ref{tab:minrep} and the problems are described in
the Appendix. They range from problems with no redundancy, at the top of the table,
to problems for which almost all input is redundant, at the bottom.
As expected \clark gives best performance for the highly redundant problems.
Parallel processing gives good speedups for the classical method.

\begin{table}[htb]
\centering
\scalebox{0.9}{
\begin{tabular}{|c|c|c|c|c|c||c||c|c|c|c|}
\hline
Name    &H/V& $m_{in}$ & $d_{in}$ & $m_{out}$& redundancy& \clark & \minrep & \multicolumn{3}{|c|}{\mplrs}\\
        &   &          &          &          &  \%       &        &         & 8 cores & 32 cores & 160 cores\\
\hline
\sphere & V &  20001   &     3    &  20000   &    0.01   &  1899  &   4833  &    830  &    197   &   51  \\
\hline
\rf     & V &   500    &     100  &    500   &     0     &  6747  &  15067  &   2682  &    672   & 203   \\
\hline
\Lt     & V &   2001   &      63  &   2000   &     0.1   &  16270 &  27042  &   6137  &    2018  & 668   \\
\hline
\tsp    & H &   3447   &      21  &   3444   &     0.1   &   165  &    203  &    18   &     4    &   3   \\ 
\hline
\ucube  & H &  40000   &      6   &   3551   &     91    &   729  &   8515  &   2085  &    645   &  289  \\
\hline
\ctype  & V &   9075   &      35  &     36   &     99    &   194  &   9518  &   5656  &    721   &  203  \\
\hline
\ducube & H &  40000   &      6   &    261   &     99    &    83  &   2814  &    636  &    296   &  145  \\ 
\hline
\end{tabular}
}
\caption{Redundancy removal (time in seconds, \mi cluster) }
\label{tab:minrep}
\end{table}

\subsection{Fourier-Motzkin Elimination}
For these experiments, the input for each problem is an \hr 
and we do one round of F-M elimination
eliminating the last column.
As explained in Section~\ref{fel}, almost all of the work consists of
redundancy elimination in step~\ref{fele} of the procedure. We extracted the
inequality system created in step~\ref{felc} so that we could test 
parallelization and Clarkson's
algorithm on problems of this sort. 
%Experimental results are given
%in Table \ref{tab:fel}.
The problems tested are basically the same
as before, with a few exceptions, and we use the non-redundant
version. The non-redundant description of
\ctype is a 36-dimensional simplex so projection is trivial.
For \sphere we first computed an \hr~to use as an input file.
Since the result is too big, we use the first 500 rows of the \hr
renaming the result \spheret.
Similarly, \Ltt is part of the \hr~corresponding to \Lt.
\ucubet is the non-redundant \hr of \ucube. 
We add two additional combinatorial polytopes.
The results are given in Table \ref{tab:fel}.

\begin{table}[hbt]
\centering
\begin{threeparttable}
\scalebox{0.9}{
\begin{tabular}{|c|c|c||c|c|c||c||c|c|c|c|}
\hline
Name &$m_{in}$& $d_{in}$ & $m_{FM}$ & $m_{out}$ &redundancy& \clark & \fel & \multicolumn{3}{|c|}{\mplrs} \\
 (H-reps)&    &          &          &           &   \%     &        &      &8 cores  &32 cores & 160 cores\\
\hline
\Ltt & 1080   &   63     &    4560  &    4320   &     5    & $>$300000$^{\ddagger}$   & 6981 &   1327  &   521   &   236    \\
\hline
\ducubet& 261 &    6     &   16897  &    1686   &    90    &   202  & 1310 &    329  &    87   &    29    \\
\hline
\hec    & 755 &   30     &   20029  &     949   &    95    &   455  &  237 &     76  &    53   &    29    \\
\hline
\cps    & 368 &   15     &   18592  &     224   &    99    &    59  &  273 &     55  &    15   &     9    \\
\hline
\ucubet &3551 &    6     & 3134438  &   17947   &    99    &732814$^\dagger$&-&   -  &     -   &     -    \\
\hline
\spheret& 500 &    3     &   62436  &      61   &   100    &    89  &  935 &    421  &   148   &   149    \\
\hline
\end{tabular}

}  %%from scalebox
\begin{tablenotes}
        \item{ $\dagger$ \mai, also see Table \ref{tab:ucube}}
        \item{ $\ddagger$ suspected bug}
\end{tablenotes}

\caption{One round of F-M elimination (time in seconds, \mi cluster) }
\label{tab:fel}
\end{threeparttable}
\end{table}

In Table~\ref{tab:fel}, $m_{FM}$ is the number of new inequalities produced in step~\ref{felc}
of F-M elimination and $m_{out}$ the number of those
remaining after redundancy removal. Redundancy increases as we go down
the table and so does the efficiency of \clark. Parallel processing again
gives substantial speedups up to 32 cores but with limited improvement after that.

The problem \ucubet demonstrates the use of the golden square from Figure~\ref{gold}.
An immediate application of F-M 
generates 396,193,328 inequalities for redundancy removal,
a formidable computation.
Starting with the
non-redundant \hr of \ucubet F-M generates 3,134,438 inequalities
for redundancy removal which is still a very challenging computation. Using
\clark on \maief this took over eight days even though it is 99\% 
redundant. This direct approach to the problem is out of reach for \minrep/\mplrs,however
we can solve the problem via vertex enumeration. 
Doing so we obtain
only 303,965 vertices which we can project to lower dimension and then 
convert to an \hr. The results are given in Table~\ref{tab:ucube}.

\begin{table}[ht!]
\centering
\begin{threeparttable}
\scalebox{0.75}{
\begin{tabular}{|c||c||r|r|r|r|r||c||r|r|r|r|r|}
\hline
\ucubet  & \multicolumn{6}{|c||}{$H~\xrightarrow~V$} & \multicolumn{5}{|c|}{$V~\xrightarrow{\pi}~H_{nr}$ } \\
$m_{H}$  &  $m_{V}$& \cdd &\lrs & 8 procs & 32 procs & 160 procs  & $d_{out}$& \lrs    & 8 procs & 32 procs & 160 procs \\
\hline
 3551& 303965& 14:07$~$&:01$~$ &:01$~$ & :00$~$ &:00$~$ &5   &$>$7:00:00$~$  &$>$7:00:00$~$  &2:20:21$~$  & 1:07:55$~$\\
 &       &  &   &   &    &   & 4 & 16:43$~$   &11:21$~$  &5:29$~$&3:20$~$ \\
 &       &  &   &   &    &   & 3 &  2:25$~$ &:50$~$  &:31$~$  & :25$~$\\
 &       &  &   &   &    &   & 2 &  :02$~$ &:09$~$  &:06$~$  &:10$~$ \\
\hline
\hline
\ucubet  & \multicolumn{6}{|c||}{$V~\xrightarrow{\pi} V_{nr}$} &  \multicolumn{5}{|c|}{$V_{nr}~\xrightarrow~H_{nr}$}\\
$d_{out}$     &  $m_{V_{nr}}$& \clark &\minrep   & 8 cores & 32 cores & 160 cores &  $m_{H_{nr}}$& \lrs   & 8 cores & 32 cores & 160 cores\\
\hline
5 &121735 &4:22:35$~$&$>$7:00:00$~$ & $>$7:00:00$~$ &2:21:43$~$&23:13$~$  &17947    & $>$7:00:00 &3:03:44 &19:25$~$ &6:39$~$\\
4 &24405&2:04:23$^\dagger$&19:12:20$^\dagger$  &4:11:17$^\dagger$   &2:04:09$^\dagger$  &13:45$^\dagger$  &11817 &1:30$~$ &:25$~$ & :07$~$ &:03$~$ \\
3 &1875&:58$~$  & 1:23:28$~$ &17:55$~$ &5:24$~$  & 3:18$~$    &  1604&:00$~$  &:00$~$&:00$~$ &:00$~$ \\
2 &40& :01$~$ & 21:55$~$ &7:07$~$ &4:37$~$ &2:14$~$   &40&:00$~$  &:00$~$&:00$~$ &:00$~$ \\
\hline
\end{tabular}
}
\begin{tablenotes}
	\item{ $\dagger$ \mai cluster}
\end{tablenotes}
\caption{Projections of \ucubet by golden square (times in days:hours:minutes), \mi~cluster }
\label{tab:ucube}
\end{threeparttable}
\end{table} 

The top left part of the table shows the computation time of the
non-redundant \vr~$V$ of \ucubet
from its \hr~$H$.
We use \cdd from \cddlib to verify the results using the double description
method.
$V$ is then projected to $d=5,4,3,2$, which introduces redundancy,
and is essentially instantaneous. For each projection the top right of the 
table shows a direct computation of its \hr, which will be non-redundant
and is denoted $H_{nr}$.
For $d=5$, oly the 32-core and 160-core runs could be completed within one week.
Running times for the other dimensions are much faster, decreasing as the dimension
diminishes. Note that with F-M elimination the opposite occurs: due to
its iterative approach running times increase as the dimension diminishes.

The bottom parts of the table give the results of first removing redundancy from $V$
getting $V_{nr}$ and then using it to compute $H_{nr}$.
Most of the time is taken in the first step, shown in the bottom left part.
Again running times decrease dramatically as the dimension decreases.
Redundancy is high in the lower dimensions, and so \clark does very well there.
For $d=5$ it is interesting the running times using 32 and 160 cores are very close
to those obtained by the direct $H_{nr}$ computation.

\section{Conclusions and Future Directions}
\label{con}
We have introduced a parallelization of the classical approach to redundancy removal
which gives substantial speedups with modest hardware up to about 32 cores.
The return on increasing the number of cores is modest, possibly due to the
relatively high fixed startup computations which each processor must make. This begins
to dominate the solution time as the number of input rows to  process decreases
with the number of cores. So one future direction is to improve the scaling
to a large number of cores.

As an application we
used the codes for redundancy removal in F-M elimination, obtaining similar results.
For highly redundant problems Clarkson's algorithm, as implemented by Fukuda, gives
extremely good performance without any parallelization. As F-M elimination
can produce extremely high redundancy it is particularly well suited for this
purpose. 
An interesting challenge is to find an efficient
method to parallelize Clarkson's algorithm. 

The number of new inequalities produced by F-M elimination is highly dependent on the 
input problem, as we see in Table \ref{tab:fel}. As we saw in the case of \ucubet, it
can be considerably faster to first compute a \vr, project it, and recompute an \hr.
This is increasingly competitive for problems where it is required to project into
a relatively low dimension. A final future direction would be to produce a hybrid
code for F-M elimination that combines both methods automatically
selecting the more appropriate
method for each instance. 

\section*{Acknowledgments}
%\begin{acknowledgement}
The authors would like to thank William Cook for pointing out the paper of Freund et al. and Komei Fukuda
for discussions on Clarkson's algorithm.
%\end{acknowledgement}

% Bibliography
\bibliographystyle{plain}
\bibliography{proj}

\begin{thebibliography}{1}

\bibitem{AJ15b}
David Avis and Charles Jordan.
\newblock mplrs: {A} scalable parallel vertex/facet enumeration code.
\newblock {\em Mathematical Programming Computation}, 10(2):267--302, 2018.

\bibitem{Chvatal}
Va\v{s}ek Chv{\'a}tal.
\newblock {\em Linear Programming}.
\newblock W.H. Freeman, 1983.

\bibitem{Cl94}
{Kenneth L.} Clarkson.
\newblock More output-sensitive geometric algorithms.
\newblock In {\em 35th Annual Symposium on Foundations of Computer Science
  (FOCS 1994)}, pages 695--702. IEEE Computer Society, 1994.

\bibitem{FRT}
{Robert M.} Freund, Robin Roundy, and {Michael J.} Todd.
\newblock Identifying the set of always-active constraints in a system of
  linear inequalities by a single linear program.
\newblock Working papers 1674-85, Massachusetts Institute of Technology (MIT),
  Sloan School of Management, 1985.

\bibitem{Fukuda20}
Komei Fukuda.
\newblock {\em Polyhedral Computation}.
\newblock ETH, Zurich, 2020.
\newblock \url{https://doi.org/10.3929/ethz-b-000426218}.

\bibitem{FGS15}
Komei Fukuda, Bernd G\"{a}rtner, and May Szedl\'{a}k.
\newblock Combinatorial redundancy detection.
\newblock In {\em 31st International Symposium on Computational Geometry (SoCG
  2015)}, volume~34 of {\em Leibniz International Proceedings in Informatics
  (LIPIcs)}, pages 315--328. Schloss Dagstuhl -- Leibniz-Zentrum f{\"u}r
  Informatik, 2015.

\bibitem{Ti08}
Hans~Raj Tiwary.
\newblock On computing the shadows and slices of polytopes.
\newblock arXiv:0804.4150, 2008.

\bibitem{Ziegler}
G\"{u}nter~M. Ziegler.
\newblock {\em Lectures on polytopes}, volume 152 of {\em Graduate Texts in
  Mathematics}.
\newblock Springer, 1995.

\end{thebibliography}

%\appendix 
%\label{notes}
\section*{Appendix}
\addcontentsline{toc}{section}{Appendix}
We briefly describe the test problems used in Section~\ref{results}.
\begin{itemize}
\item
\sphere is a random set of 20000 rational points on the unit sphere.
We added a redundant vertex at line 13451 of the input.
\item
\rf is a random set of 500 points in the 100-dimensional cube with coordinates between 1 and 9.
\item
\Lt derives from the Lambda polytope in quantum physics and was contributed by
Selman Ipek. For Table~\ref{tab:minrep} we added a redundant constraint at line 1393 of the input.
\item
\tsp is the seven city travelling salesman polytope. We added 3 hidden linearities.
\item
\cps is the 6 point cut polytope.
\item
\ucube, \ctype and \ducube were downloaded from Komei Fukuda's webpage:\\
\url{https://people.inf.ethz.ch/fukudak/ClarksonExp/ExperimentCtype.html} \\
(\ctype was contributed by Mathieu Dutour).
\item
\hec is the holographic cone, again from quantum physics, developed with Sergio 
Hern\'{a}ndez-Cuenca.

\end{itemize}
To get the intermediate polyhedra for input to \clark for Table \ref{tab:fel} we first
run \fel for a few seconds on the input file with the \texttt{debug} option added.
The \hr~of the intermediate polyhedron is included in the output and can be extracted.
\end{document}